\newtheorem{theorem}{Theorem}[section]
\newtheorem{proposition}[theorem]{Proposition}
\newtheorem{lemma}[theorem]{Lemma}
\theoremstyle{definition}
\newtheorem{rem}[theorem]{Remark}
\newtheorem{definition}[theorem]{Definition}
\newtheorem{ex}[theorem]{Example}
\newtheorem{question}[theorem]{Question}
\def\R{\mathbb{R}}
\def\Q{\mathbb{Q}}
\def\N{\mathbb{N}}
\DeclareMathOperator{\wtan}{Tan}
\newcommand{\whocare}[1]{}
\DeclareMathOperator{\dimh}{dim_H}
\DeclareMathOperator{\dist}{dist}
\DeclareMathOperator{\Lip}{Lip}
\DeclareMathOperator{\exc}{excess}
\numberwithin{equation}{section}
\title{pseudotangents to lipschitz curves}
\author{Eve Shaw}
\date{\today}
\address{Department of Mathematics\\ The University of Illinois\\ Urbana, IL, 61801}
\email{emshaw@illinois.edu}
\begin{document}
\subjclass[2020]{Primary 28A75; Secondary 51F30, 53A04}
\keywords{Lipschitz curves, tangents}
\begin{abstract}
In this paper, we extend the result of \cite[Appendix A]{QSS} by showing that the set on which every pseudotangent is obtained on a Lipschitz curve can be any compact, uniformly disconnected set in Euclidean space which admits any Lipschitz capture. We do not obtain a characterization of such sets however, indeed we leave open the very strong question of whether or not a Lipschitz curve can obtain every pseudotangent at every point.
\end{abstract}

\maketitle
\section{Introduction}
Rademacher's Theorem, a classical theorem in geometric measure theory, states that if $f:\R^m\to \R^n$ is Lipschitz, then $f$ is differentiable at $\mathscr{L}^m$-almost every point $x\in \R^m$, where $\mathscr{L}^m$ denotes the $m$-dimensional Lebesgue measure on $\R^m$ (see for example \cite[Theorem 7.3]{Mattila}). Significant work over the years has gone into investigating potential converses and extensions to this theorem, for instance one could ask whether there exists a Lebesgue-null set $N\subset \R^m$ such that for every Lipschitz function $f:\R^m\to \R^n$, there exists a point $x\in N$ such that $f$ is differentiable at $x$. An answer to this question has been obtained: such a Lebesgue-null set $N\subset\R^m$ exists if and only if $m>n$; see the work of Preiss and Speight in \cite{PS15} for the proof and for a more complete history of this problem. 

In addition to the classical version of Rademacher's Theorem, there are many notions of ``tangents to sets" which also admit a version of this theorem. If $\Gamma\subset \R^d$ is a Lipschitz curve, meaning that it is the image $f([0,1])$ of a Lipschitz map $f:[0,1]\to\R^d$, then $\Gamma$ admits a tangent line at $\mathcal{H}^1$-almost all of its points. This statement holds for approximate tangent planes (see \cite[Theorem 15.19]{Mattila}), for approximate tangent cones (see \cite[Theorem 3.8]{falc}), and most recently for tangents in the sense of Badger and Lewis \cite[Definition 3.1]{BL} (see \cite[Theorem 1.1]{QSS}). Here and throughout this article, $\mathcal{H}^1$ denotes the $1$-dimensional Hausdorff measure. Furthermore, with Vellis we proved in \cite[Appendix A]{QSS} that in every $\R^d$ with $d\geq 2$, there exists a Lipschitz curve $H\subset \R^d$ containing the origin ${\bf 0}$ such that every possible tangent is attained for $H$ at ${\bf 0}$. The goal of this paper is to expand on the work in this direction with the following result.
\begin{theorem}\label{thm0}
Let $d\geq 2$ be an integer and let $K\subset \R^d$ be compact, uniformly disconnected, and admit a Lipschitz capture. Then there exists a Lipschitz capture $F=f([0,1])$ of $K$ such that for every $x\in K$, $\Psi-\wtan(F,x)=\mathfrak{C}_U(\R^d;{\bf 0})$.
\end{theorem}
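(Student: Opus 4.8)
The plan is to take the hypothesized Lipschitz capture of $K$ as a \emph{backbone} and then graft onto it, at all small scales and near every point of $K$, rescaled copies of a countable family of model curves that together realize a dense subset of $\mathfrak{C}_U(\R^d;{\bf 0})$; uniform disconnectedness is what supplies the room to do this while keeping the total length finite. Concretely, I would first exploit uniform disconnectedness to organize $K$ by a Christ--David / ultrametric tree structure: a nested family of ``cubes'' $\{Q^n_i\}$ at geometrically decaying scales $2^{-n}$, each cube containing a relatively well-separated, bounded number of children, with gaps between sibling cubes comparable to the parent's diameter. Each point $x\in K$ then corresponds to a unique infinite branch $Q^0\supset Q^1\supset\cdots$ contracting to $x$.

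Second, since $\mathfrak{C}_U(\R^d;{\bf 0})$ is compact in the local (Attouch--Wets) topology, I would fix a countable dense sequence $(T_j)_{j\in\N}$ of model germs in it, each realized---after the construction of \cite[Appendix A]{QSS}---by a bounded-length, bounded-complexity building block $B_j$ scaled to unit size. The curve $F$ is built scale by scale: inside the gap adjacent to each cube $Q^n_i$, I insert a copy of some $B_{j(n,i)}$ rescaled to size comparable to $\diam Q^n_i$, grafted onto the backbone by short connecting arcs. The assignment $j(n,i)$ is chosen so that, descending along \emph{any} branch, the indices $j(n,i)$ run through every value of $j$ infinitely often; this is the mechanism that forces, at every $x\in K$ simultaneously rather than merely almost everywhere, a sequence of scales $r_k\to 0$ at which the blow-up of $F$ agrees with some $B_j$.

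Third, I would verify the three required properties. The capture property $K\subseteq F$ persists because the backbone already contains $K$ and the grafts only add material. The Lipschitz (finite-length) property follows from a geometric series: the block inserted in $Q^n_i$ contributes length $\lesssim \diam Q^n_i$, and bounded branching together with the uniform relative separation from uniform disconnectedness makes $\sum_{n,i}\diam Q^n_i$ a convergent sum, so $F$ admits a Lipschitz, indeed arclength, parametrization. The inclusion $\Psi-\wtan(F,x)\subseteq\mathfrak{C}_U(\R^d;{\bf 0})$ is immediate, as the right-hand side is by definition the collection of all pseudotangents. For the reverse inclusion, given $T\in\mathfrak{C}_U(\R^d;{\bf 0})$ I pick $T_{j_k}\to T$, use the branch through $x$ to extract base points $x_k\in K$ with $x_k\to x$ and scales $r_k\to 0$ at which $r_k^{-1}(F-x_k)$ converges to $B_{j_k}\approx T_{j_k}$, and then diagonalize to exhibit $T$ as a genuine pseudotangent at $x$.

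The main obstacle, and the place where all the hypotheses are spent, is reconciling two competing demands: the grafts must be dense enough---in scale, in spatial location near every point, and in the target space $\mathfrak{C}_U(\R^d;{\bf 0})$---to produce every pseudotangent at every point, yet sparse enough that the total length stays finite and $F$ remains a single Lipschitz curve. Uniform disconnectedness is precisely the quantitative input that resolves this, guaranteeing gaps of definite relative size at every scale (room for the decorations) together with bounded branching and geometric scale decay (summable length). The most delicate bookkeeping will be the grafting itself: keeping the connecting arcs short enough not to distort the blow-up limits while preserving connectivity, and checking that the rescalings at $x$ converge to the intended germs rather than to something corrupted by neighbouring decorations---where the separation of sibling cubes again isolates each block at its own scale.
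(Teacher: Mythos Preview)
Your approach is genuinely different from the paper's, and the main idea---decorating the backbone with model germs in the gaps supplied by uniform disconnectedness---is sound in spirit. However, the length estimate as you have written it does not close. You assert that ``bounded branching together with the uniform relative separation from uniform disconnectedness makes $\sum_{n,i}\diam Q^n_i$ a convergent sum,'' but bounded branching only gives that the number of cubes at level $n$ is at most $N^n$ for some constant $N$, so the sum is dominated by $\sum_n N^n\delta^n$, which converges only when $N\delta<1$. Uniform disconnectedness does \emph{not} force this: in $\R^d$ with $d\geq 2$ the sum of the children's diameters can exceed the parent's, so the level sums need not decay geometrically. The only bound one gets from the Lipschitz-capture hypothesis is $M(\delta^n)\,\delta^n\lesssim \mathcal{H}^1(G_0)$ for each fixed $n$ (the curve must travel the separation distance between pieces), and summing a bounded sequence over $n$ diverges. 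So inserting a block of size comparable to $\diam Q^n_i$ at \emph{every} cube of \emph{every} level is, in general, too much. A smaller point: the inclusion $\Psi\text{-}\wtan(F,x)\subseteq\mathfrak{C}_U(\R^d;{\bf 0})$ is not ``by definition''; $\mathfrak{C}_U(\R^d;{\bf 0})$ is defined as the closed sets through ${\bf 0}$ with only unbounded components, and the inclusion is the content of Proposition~\ref{prop:alltan}.

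The paper sidesteps the summability problem entirely by a different mechanism. Rather than distributing a dense family $\{B_j\}$ along every branch at every scale, it uses a single universal curve $H$ with $\wtan(H,{\bf 0})=\mathfrak{C}_U(\R^d;{\bf 0})$ and inserts scaled copies of $H$ only along a \emph{chosen} sequence $y_n\to x$ with $\sum_n|x-y_n|<\infty$, so the added length is controlled by that summable series rather than by the geometry of $K$ (Lemmas~\ref{lem1}--\ref{guh}). This yields $\Psi\text{-}\wtan(F,x)=\mathfrak{C}_U(\R^d;{\bf 0})$ at one point with arbitrarily small length increment; one then iterates over a countable dense subset $\{x_n\}\subset K$ with a summable budget $\sum\delta_n$, passes to the Hausdorff limit via Go{\l}\k{a}b, and finally upgrades from the dense set to all of $K$ using Remark~\ref{tanrem}(2). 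Your scheme could likely be repaired by budgeting insertions against the complementary arcs of the backbone (whose total length is finite) rather than against the cubes, but as written the finite-length step is a genuine gap.
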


Here and throughout, we use the notation $\wtan (X,x)$ to denote the set of \emph{tangents} to the set $X$ at the point $x$, $\Psi-\wtan (X,x)$ to denote the set of \emph{pseudotangents} to the set $X$ at the point $x$, and $\mathfrak{C}_U(\R^d;{\bf 0})$ to denote the collection of all closed subsets of $\R^d$ which contain the origin and have only unbounded components, see Section \ref{prelim} for the details of these sets. In Proposition \ref{prop:alltan}, we prove that as long as $X\subset \R^d$ is a nondegenerate continuum, it holds that $\Psi-\wtan(X,x)\subset \mathfrak{C}_U(\R^d;{\bf 0})$ for each point $x\in X$. Note that in Theorem \ref{thm0}, we obtain a result in terms of \emph{pseudotangents}, not tangents. The difference is articulated precisely in Section \ref{prelim}, but for the time being the reader should notice that Lipschitz curves may behave very differently with respect to pseudotangents as opposed to with respect to tangents. In particular, in Example \ref{ex2} we show that there is a nondegenerate Lipschitz curve where \emph{no point} admits a unique pseudotangent, let alone a unique pseudotangent line, while by contrast it is known that a Lipschitz curve must, $\mathcal{H}^1$-almost everywhere, admit a unique \emph{tangent}, and that the unique tangent must be a line. In light of this, we are led to the following question, to which we (perhaps ambitiously) conjecture that the answer is ``yes".
\begin{question}\label{q1}
For each integer $d\geq 2$, does there exist a Lipschitz curve $\Gamma\subset \R^d$ such that for every point $x\in\Gamma$, $\Psi-\wtan(\Gamma,x)=\mathfrak{C}_U(\R^d;{\bf 0})$?
\end{question}

Our strategy for proving Theorem \ref{thm0} involves constructing a Lipschitz capture $G=g([0,1])$ of the set $K$ such that there is a countable set $\{z_k\}_{k\in\N}\subset G\setminus K$ for which for all $k\in\N$, $\wtan(G,z_k)=\mathfrak{C}_U(\R^d;{\bf 0})$ and such that $K\subset \overline{\{z_k\}}_{k\in\N}$. We then prove as a consequence that all of the points $z\in K$, $\Psi-\wtan(G,z)=\mathfrak{C}_U(\R^d;{\bf 0})$; indeed the proof of this fact does not rely on the point $z$ being in $K$, it depends only on $z$ being contained in the closure $\overline{\{z_k\}}_{k\in\N}$. By this argument, we do obtain a stronger result about sets of points in Lipschitz curves on which every tangent is attained than we found in \cite[Appendix A]{QSS}; we improve from showing that there is a Lipschitz curve with \emph{a} point that admits every possible tangent to showing that there is a Lipschitz curve with \emph{a countable set} of points admitting every possible tangent, and that furthermore this countable set can accumulate to an uncountable set.

\subsection{Acknowledgments}
We would like to thank Jeremy Tyson for many helpful conversations giving this project direction, as well as for his commentary on an early draft. Additionally, we thank Vyron Vellis for suggesting Example \ref{ex1} and its inclusion in the final manuscript.

\section{Preliminaries}\label{prelim}
\begin{definition}
Given a set $K\subset \R^d$, we say that a map $f:[0,1]\to \R^d$ or its image $F=f([0,1])$ is a \emph{Lipschitz capture of} $K$ if $f$ is a Lipschitz function and $K\subset F$. We will frequently use a capital letter such as $F$ to denote the image of a map $f$ denoted using the corresponding lowercase letter.
\end{definition}

Following \cite{BL}, given two non-empty subsets $A,B\subset\R^d$, we define the \emph{excess of $A$ over $B$} to be the quantity $\exc(A,B):=\sup_{a\in A}\inf_{b\in B}|a-b|$. This asymmetric quantity measures how far $A$ is from $B$, but not the reverse. For example, $\exc([0,1],\{0\})=1$ while $\exc(\{0\},[0,1])=0$. Following the notation used in \cite{QSS}, we denote by $\mathfrak{C}(\R^d)$ the collection of non-empty closed subsets of $\R^d$, by $\mathfrak{C}(\R^d;{\bf 0})$ the collection of closed subsets of $\R^d$ which contain the origin ${\bf 0}$, and by $\mathfrak{C}_U(\R^d;{\bf 0})$ the collection of sets in $\mathfrak{C}(\R^d;{\bf 0})$ which have only unbounded components. All three of these collections carry a metrizable topology, called the \emph{Attouch-Wets topology}, which is characterized by the following lemma.
\begin{lemma}[{\cite[Chapter 3]{Beer}}]
A sequence of sets $(X_i)_{i\in\N}\subset \mathfrak{C}(\R^d)$ converges to a set $X\in\mathfrak{C}(\R^d)$ in the Attouch-Wets topology if and only if for every $r>0$,
\[\lim_{i\to \infty}\exc(X_i\cap \overline{B}({\bf 0},r),X)=0\text{ and }\lim_{i\to\infty}\exc(X\cap\overline{B}({\bf 0},r),X_i)=0.\]
Furthermore, $\mathfrak{C}(\R^d;{\bf 0})$ is sequentially compact with respect to this topology.
\end{lemma}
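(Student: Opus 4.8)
The plan is to adopt, following Beer, the working definition of the Attouch--Wets topology as uniform convergence of the distance functionals on bounded sets: $X_i\to X$ precisely when, for every $r>0$,
\[\lim_{i\to\infty}\sup_{x\in\overline{B}({\bf 0},r)}|\dist(x,X_i)-\dist(x,X)|=0,\]
and then to derive both the two-sided excess criterion and sequential compactness from this single definition. The entire argument leans on two elementary facts: each functional $x\mapsto\dist(x,X)$ is $1$-Lipschitz, and for a closed set $\dist(a,X_i)=0$ holds exactly when $a\in X_i$.

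For the equivalence, the forward direction is immediate: if $a\in X_i\cap\overline{B}({\bf 0},r)$ then $\dist(a,X_i)=0$, so $\dist(a,X)=|\dist(a,X)-\dist(a,X_i)|$ is dominated by $\sup_{x\in\overline{B}({\bf 0},r)}|\dist(x,X_i)-\dist(x,X)|$; taking the supremum over such $a$ bounds $\exc(X_i\cap\overline{B}({\bf 0},r),X)$, and the symmetric computation bounds the reverse excess. The reverse direction is where the real work lies. First I would use nonemptiness of $X$ to fix $p\in X$ and note $\dist(x,X)\le r+|p|$ on $\overline{B}({\bf 0},r)$; the reverse-excess hypothesis applied at radius $|p|$ forces $\dist(p,X_i)\to0$, so $X_i\cap\overline{B}({\bf 0},|p|+1)\neq\emptyset$ for large $i$, giving a uniform bound on $\dist(x,X_i)$ as well. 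With both distance functions bounded by some $\rho$ on the ball, a near-nearest point $b\in X_i$ to a given $x\in\overline{B}({\bf 0},r)$ satisfies $|b|\le 2\rho$, hence lies in $X_i\cap\overline{B}({\bf 0},2\rho)$; the first excess hypothesis then produces $c\in X$ within $\exc(X_i\cap\overline{B}({\bf 0},2\rho),X)$ of $b$, and the triangle inequality yields $\dist(x,X)-\dist(x,X_i)\le\exc(X_i\cap\overline{B}({\bf 0},2\rho),X)$ uniformly in $x$. The symmetric bound comes from the reverse excess, and together they give the required uniform convergence. I expect the main obstacle here to be exactly this radius-inflation bookkeeping: the points realizing the distances need not lie in $\overline{B}({\bf 0},r)$, so one must enlarge the radius in a controlled way and verify that every relevant competitor point stays inside the inflated ball.

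For sequential compactness of $\mathfrak{C}(\R^d;{\bf 0})$, I would pass to the functionals $f_i:=\dist(\cdot,X_i)$. Each is $1$-Lipschitz with $f_i({\bf 0})=0$ (since ${\bf 0}\in X_i$) and $0\le f_i(x)\le|x|$, so the family is equicontinuous and pointwise bounded. Applying Arzel\`a--Ascoli on each closed ball $\overline{B}({\bf 0},n)$ and diagonalizing produces a subsequence $f_{i_k}$ converging uniformly on bounded sets to a $1$-Lipschitz limit $f\ge0$ with $f({\bf 0})=0$. I would then set $X:=f^{-1}(0)$, a closed set containing ${\bf 0}$, and prove $f=\dist(\cdot,X)$. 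The inequality $f\le\dist(\cdot,X)$ follows from $1$-Lipschitzness together with $f|_X=0$; for $f\ge\dist(\cdot,X)$, given $x$ I would choose nearest points $y_k\in X_{i_k}$ with $|x-y_k|=f_{i_k}(x)\to f(x)$, extract a convergent subsequence $y_k\to y$, and check that $f(y)=0$, so $y\in X$, while $|x-y|=f(x)$, whence $\dist(x,X)\le f(x)$. Once $f=\dist(\cdot,X)$ is established, the uniform-on-bounded-sets convergence $f_{i_k}\to f$ is, by the definition above, exactly Attouch--Wets convergence $X_{i_k}\to X$ with $X\in\mathfrak{C}(\R^d;{\bf 0})$. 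The delicate point in this part is the lemma that a uniform-on-bounded-sets limit of distance functions is again the distance function to its own zero set, which is what guarantees the limit object is a genuine closed set rather than merely a limiting functional; the rest is a routine application of Arzel\`a--Ascoli.
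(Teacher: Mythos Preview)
The paper does not give its own proof of this lemma; it simply cites Beer's monograph and states the result as background. So there is no in-paper argument to compare against.

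Your proposal is correct and is essentially the standard route: take uniform convergence of distance functionals on bounded sets as the working definition, derive the two-sided excess characterization by chasing near-nearest points (with the necessary radius inflation you correctly flag), and obtain sequential compactness of $\mathfrak{C}(\R^d;{\bf 0})$ via Arzel\`a--Ascoli on the $1$-Lipschitz family $\{\dist(\cdot,X_i)\}$ together with the lemma that a locally uniform limit of distance functions is the distance function to its own zero set. The only bookkeeping point worth tightening is that the bound ``$|b|\le 2\rho$'' really comes out as $|b|\le r+\rho+\varepsilon$, which is $\le 2\rho+\varepsilon$ once $\rho\ge r$; choosing $\rho=r+|p|+1$ makes this hold, so the argument goes through as written up to this harmless adjustment.
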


\begin{definition}[Tangents, Pseudotangents, {\cite[Definition 3.1]{BL}}]
Given a set $K\in\mathfrak{C}(\R^d)$ and a point $x\in K$, we call a set $T\in\mathfrak{C}(\R^d)$ a \emph{tangent to $K$ at $x$} if there exists a sequence of positive scales $(r_i)_{i\in\N}$ tending to $0$ such that $\lim_{i\to\infty} (r_i)^{-1}(K-x)=T$, where this limit is taken with respect to the Attouch-Wets topology. We call a set $S\in\mathfrak{C}(\R^d)$ a \emph{pseudotangent to $K$ at $x$} if there exists a sequence of positive scales $(r_i)_{i\in\N}$ tending to $0$ and a sequence of points $(x_i)_{i\in\N}\subset K$ with $\lim_{i\to\infty} x_i=x$ such that $\lim_{i\to\infty} (r_i)^{-1}(K-x_i)=S$, where this limit is taken with respect to the Attouch-Wets topology. We denote by $\wtan(K,x)$ the collection of tangents to $K$ at $x$, and by $\Psi-\wtan (K,x)$ the collection of pseudotangents to $K$ at $x$.
\end{definition}

\begin{rem}\label{tanrem}
Note the following two facts about tangents and pseudotangents, the proofs of which are left to the interested reader. 
\begin{enumerate}
\item For any non-empty closed set $K$ and any point $x\in K$, $\wtan(K,x)\subset\Psi-\wtan (K,x)$.
\item If $(x_i)_{i\in\N}\subset K$ converges to a point $x\in K$ and if $T\in\wtan(K,x_i)$ for each $i$, then $T\in\Psi-\wtan(K,x)$ as well.
\end{enumerate}
\end{rem}
Throughout this article, $H$ will denote the set of the same name as in \cite[Appendix A]{QSS}, to which we refer the reader for the details of its construction. The critical properties of $H$ for this article are that $H$ is a finite-length continuum, so by \cite[Theorem 4.4]{AO} it is a Lipschitz curve, which contains ${\bf 0}$, is contained in $[-1,1]^d$, and for which $\wtan(H,{\bf 0})=\mathfrak{C}_U(\R^d;{\bf 0})$.
\section{Results}\label{results}

This section is devoted to the proof of Theorem \ref{thm0}. Note that for a compact, uniformly disconnected set $K\subset \R^d$ admitting a Lipschitz capture as in Theorem \ref{thm0}, it is straightforward to show that there exists a set $\tilde{K}$ containing $K$ such that $\tilde{K}$ admits a Lipschitz capture, is compact, is uniformly disconnected, and is perfect.  Thus we may assume without loss of generality from here on that $K$ itself is perfect in addition to its other properties.

Further, note that the following result holds, with proof exactly the same as the proof of \cite[Lemma 2.5]{QSS}, up to replacing instances of $x$ with $x_n$ instead (to move from tangents to pseudotangents). Structurally the proofs are identical and all of the arguments still hold \emph{mutatis mutandis}.
\begin{proposition}\label{prop:alltan}
Let $d\geq 2$ be an integer and let $X\subset \R^d$ be a nondegenerate continuum. Then for every point $x\in X$, we have that $\Psi-\wtan(X,x)\subset \mathfrak{C}_U(\R^d;{\bf 0})$.
\end{proposition}
This means that the pseudotangent collection given in Theorem \ref{thm0} is maximal at all of the points of $K$. The proof of Theorem \ref{thm0} follows after several lemmata about the geometry of Lipschitz captures of the set $K$. Fix, for the remainder of this section, a compact, uniformly disconnected, perfect set $K\subset\R^d$ which admits a Lipschitz capture $G_0=g_0([0,1])$. By uniform disconnectedness, let $\lambda>0$ such that for every pair of distinct points $x,y\in K$, if $\{p_0,p_1,\dots,p_n\}\subset K$ with $p_0=x$ and $p_n=y$, then there exists some $i\in\{0,1,\dots,n-1\}$ for which $|p_{i+1}-p_i|>\lambda|x-y|$.

\begin{lemma}\label{lem1}
For every pair of distinct $x,y\in K$, for each $a\in g_0^{-1}(\{x\})$ and $b\in g_0^{-1}(\{y\})$, there exist $s,t\in g_0^{-1}(K)$ such that $(s,t)\subset (\min(\{a,b\}),\max(\{a,b\}))\setminus g_0^{-1}(K)$ and such that there is a value $\zeta\in(s,t)$ with $\dist(g_0(\zeta),K)> \frac{1}{4}\lambda|x-y|$, further satisfying that $g_0(\zeta_n)$ is a point of $\mathcal{H}^1$ density $1$ in $G_0$.
\end{lemma}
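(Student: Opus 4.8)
The plan is to reduce to the case $a<b$, establish the distance estimate by a chain argument against uniform disconnectedness, and then upgrade the resulting far-from-$K$ point to a density-$1$ point using elementary rectifiability.

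First I would argue by contradiction for the existence of a parameter in $(a,b)$ whose image is far from $K$. Suppose instead that $\dist(g_0(\zeta),K)\le \tfrac14\lambda|x-y|$ for every $\zeta\in(a,b)$; since $g_0(a)=x$ and $g_0(b)=y$ lie in $K$, the entire arc $g_0([a,b])$ then lies within $\tfrac14\lambda|x-y|$ of $K$. Letting $L$ be a Lipschitz constant for $g_0$, choose a partition $a=u_0<\cdots<u_N=b$ of mesh below $\lambda|x-y|/(4L)$, so that $|g_0(u_{i+1})-g_0(u_i)|<\tfrac14\lambda|x-y|$ for each $i$, and for each $i$ select a nearest point $p_i\in K$ to $g_0(u_i)$, noting $p_0=x$ and $p_N=y$. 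The triangle inequality then gives $|p_{i+1}-p_i|<\tfrac34\lambda|x-y|<\lambda|x-y|$ for every $i$, so $\{p_0,\dots,p_N\}\subset K$ is a chain from $x$ to $y$ all of whose consecutive gaps are strictly less than $\lambda|x-y|$, contradicting the choice of $\lambda$. Hence some $\zeta_0\in(a,b)$ satisfies $\dist(g_0(\zeta_0),K)>\tfrac14\lambda|x-y|$.

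Next I would locate the gap. Because $\dist(g_0(\zeta_0),K)>0$ we have $\zeta_0\notin g_0^{-1}(K)$, and since $g_0^{-1}(K)$ is closed, $\zeta_0$ lies in a component $(s,t)$ of the open set $(a,b)\setminus g_0^{-1}(K)$; its endpoints satisfy $s,t\in g_0^{-1}(K)$ and $(s,t)\subset(a,b)\setminus g_0^{-1}(K)$, which is the required containment. It remains to replace $\zeta_0$ by a parameter in $(s,t)$ whose image is simultaneously far from $K$ and a density-$1$ point of $G_0$. Here I would pass to the subcontinuum $E=g_0([s,t])$, which is nondegenerate since it contains both $g_0(s)\in K$ and the far point $g_0(\zeta_0)$, and consider the nonempty relatively open set $W=\{q\in E:\dist(q,K)>\tfrac14\lambda|x-y|\}$. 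The crucial observation is that every nonempty relatively open subset of a nondegenerate continuum has positive $\mathcal{H}^1$ measure: a standard crossing estimate shows that any connected subset of $E$ joining a point $q\in W$ to the boundary of a small ball contained in $W$ has positive length. Since $G_0$ is $1$-rectifiable with $\mathcal{H}^1(G_0)<\infty$, the density theorem for rectifiable sets guarantees that $\mathcal{H}^1$-almost every point of $G_0$ is a density-$1$ point, so the non-density-$1$ points form an $\mathcal{H}^1$-null set. As $\mathcal{H}^1(W)>0$, the set $W$ must contain a density-$1$ point $q^\ast=g_0(\zeta)$; and since $q^\ast\notin K$ while $g_0(s),g_0(t)\in K$, the parameter $\zeta$ necessarily lies in the open interval $(s,t)$, yielding exactly the point required by the statement.

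The heart of the argument is the chain construction, but I expect the main obstacle to be the density-$1$ upgrade: the far-from-$K$ parameter produced by the contradiction need not map to a density-$1$ point, and if $g_0$ were constant on the relevant subinterval its image would be a single point carrying no density information. The positive-measure property of relatively open subsets of a nondegenerate continuum is precisely what rules this out, forcing $W$ to meet the conull set of density-$1$ points; verifying this property, rather than the distance estimate itself, is the step that requires the most care.
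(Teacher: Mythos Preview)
Your proof is correct and uses the same two ingredients as the paper---a chain argument against the uniform disconnectedness constant $\lambda$, and the density theorem for $1$-rectifiable sets---but you organize them differently. The paper first decomposes $(a,b)\setminus g_0^{-1}(K)$ into gaps $(s_i,t_i)$ and applies uniform disconnectedness to the chain of endpoints $g_0(s_i),g_0(t_i)\in K$ to locate a gap with $|g_0(s_i)-g_0(t_i)|>\lambda|x-y|$; it then runs a second chain argument inside that gap to produce the far-from-$K$ point, with the contradiction only closing once one notes that the construction would have to succeed on \emph{every} such gap simultaneously. You instead run a single chain argument across all of $[a,b]$, obtain $\zeta_0$ directly, and only afterwards read off the component $(s,t)$ containing it---one invocation of uniform disconnectedness rather than two, and no need to patch chains across multiple gaps. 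Your density-$1$ upgrade is also more careful than the paper's ``perturb $\zeta_n$ slightly'': you explicitly argue that the far-from-$K$ locus $W\subset g_0([s,t])$ is a nonempty relatively open subset of a nondegenerate continuum, hence has positive $\mathcal{H}^1$-measure and must meet the conull set of density-$1$ points, which handles the degenerate case where $g_0$ is locally constant near $\zeta_0$.
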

\begin{proof}
Note that $(\min(\{a,b\}),\max(\{a,b\}))\setminus g_0^{-1}(K)$ is a countable disjoint union of open intervals $(s_i,t_i)$. Since $K$ is uniformly disconnected, there must be some index $i$ for which $|g_0(s_i)-g_0(t_i)|>\lambda |x-y|$, and since $g_0$ is Lipschitz we have also that $|g_0(s_i)-g_0(t_i)|\leq \Lip(g_0)|s_i-t_i|$, so we obtain $|s_i-t_i|>\frac{\lambda}{\Lip(g_0)}|x-y|$. Call these values $s$ and $t$ instead of $s_i$ and $t_i$, and note that since $K$ is not a singleton, $\Lip(g_0)\neq 0$. 

Now if every value $\zeta\in (s,t)$ has $\dist(g_0(\zeta),K)< \frac{1}{3}\lambda|x-y|$, then let $\{\zeta_0,\zeta_1,\dots,\zeta_m\}\subset [s,t]$ be indexed in increasing order and satisfy $\zeta_0=s$, $\zeta_m=t$, and $|\zeta_{i+1}-\zeta_i|\leq\frac{1}{6\Lip(g_0)}\lambda|x-y|$ for each $i\in\{0,1,\dots,m-1\}$. Then there exists a set of points $\{q_0,q_1,\dots,q_m\}\subset K$ with $|q_i-g_0(\zeta_i)|<\frac{1}{3}\lambda|x-y|$ for each $i$, therefore for each $i\in\{0,1,\dots,m-1\}$ we have
\begin{align*}
|q_{i+1}-q_i|&\leq |q_{i+1}-g_0(\zeta_{i+1})|+|g_0(\zeta_{i+1})-g_0(\zeta_i)|+|g_0(\zeta_i)-q_i|\\
&\leq \frac{1}{3}\lambda|x-y|+\frac{1}{6}\lambda|x-y|+\frac{1}{3}\lambda|x-y|<\lambda|x-y|,
\end{align*}
but this contradicts the fact that $K$ is uniformly disconnected if it can be done for every $(s_i,t_i)$ with $|g_0(s_i)-g_0(t_i)|>\lambda |x-y|$ as in the first paragraph. To complete the proof, note that points of $\mathcal{H}^1$ density $1$ form a set of full $\mathcal{H}^1$ measure in $G_0$, so we can perturb $\zeta_n$ slightly to yield the desired inequality and further have that $g_0(\zeta_n)$ is a point of $\mathcal{H}^1$ density $1$ in $G_0$ (see \cite[Theorem 16.2]{Mattila}.
\end{proof}

\begin{lemma}\label{lem2}
For every $x\in K$, there exists a Lipschitz capture $F=f([0,1])$ of $K$ such that
\begin{equation}\label{eqn2}
\Psi-\wtan(F,x)=\mathfrak{C}_U(\R^d;{\bf 0}).
\end{equation}
Moreover, if $G=g([0,1])$ is any Lipschitz capture of $K$ and $r,\delta>0$ are any positive numbers, then we can find a Lipschitz capture $F=f([0,1])$ of $K$ satisfying (\ref{eqn2}) and further satisfying $F\setminus B(x,r)=G\setminus B(x,r)$ and $\mathcal{H}^1(F)-\mathcal{H}^1(G)< \delta$.
\end{lemma}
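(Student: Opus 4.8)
The plan is to prove the \emph{moreover} statement, from which the first assertion follows by taking $G=G_0$ together with any $r,\delta>0$. Following the outline in the introduction, the strategy is to modify $G$ only inside $B(x,r)$ by grafting on a sequence of shrinking similar copies of the model curve $H$ at basepoints $c_j\to x$, arranged so that near each $c_j$ the resulting curve $F$ looks \emph{exactly} like a copy of $H$ centered there. Then at each $c_j$ we will have $\wtan(F,c_j)=\wtan(H,{\bf 0})=\mathfrak{C}_U(\R^d;{\bf 0})$, so for any fixed $T\in\mathfrak{C}_U(\R^d;{\bf 0})$ the relation $T\in\wtan(F,c_j)$ holds for every $j$; since $c_j\to x$, Remark \ref{tanrem}(2), applied to the closed set $F$, gives $T\in\Psi-\wtan(F,x)$, whence $\mathfrak{C}_U(\R^d;{\bf 0})\subset\Psi-\wtan(F,x)$. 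The reverse inclusion is Proposition \ref{prop:alltan}, since $F$ is a nondegenerate continuum, and together these give \eqref{eqn2}.

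The conceptual heart is producing ``room'' near $x$, namely points $w_j\to x$ on $G$ with $\dist(w_j,K)>0$. I would argue as follows. As $K$ is perfect, $x$ is not isolated, so choose $y_n\in K\setminus\{x\}$ with $y_n\to x$ and preimages $b_n\in g^{-1}(y_n)$; passing to a subsequence, $b_n\to\beta$ with $g(\beta)=x$, and after a further pigeonhole (say $b_n>\beta$) the subarcs $g([\beta,b_n])$ have diameter at most $\Lip(g)|b_n-\beta|\to 0$, hence lie in balls shrinking to $x$. Applying the argument of Lemma \ref{lem1} to the capture $G$ and to each pair $(x,y_n)$ with preimages $\beta$ and $b_n$ (its proof uses only that $g$ is a Lipschitz capture of $K$), I obtain $\zeta_n\in(\beta,b_n)$ with $w_n:=g(\zeta_n)$ satisfying $\dist(w_n,K)>\tfrac14\lambda|x-y_n|>0$, with $w_n$ a point of $\mathcal{H}^1$-density $1$ in $G$, and with $w_n\in g([\beta,b_n])$, so that $w_n\to x$. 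After discarding finitely many indices and thinning, I may assume the $w_n$ are distinct, the balls $\overline{B}(w_n,\sigma_n)$ are pairwise disjoint and disjoint from $K$ (taking $\sigma_n<\tfrac14\lambda|x-y_n|$), and $\overline{B}(w_n,2\sigma_n)\subset B(x,r)$ for all $n$.

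Next I graft the copies of $H$. In each $\overline{B}(w_n,\sigma_n)$ the curve $G$ is disjoint from $K$ and, being of finite length, has planar measure zero, so there is open room to place a similar copy $H_n:=c_n+\sigma_n' H$ (with $\sigma_n'\ll\sigma_n$) whose center $c_n$ (the image of ${\bf 0}$) avoids $G$, and to join $H_n$ to $G$ by a short segment from $w_n$ to a point $\alpha_n=c_n+\sigma_n'\alpha$ with $\alpha\in H\setminus\{{\bf 0}\}$. Set $D_n:=[w_n,\alpha_n]\cup H_n$ and $F:=G\cup\bigcup_n D_n$. Each $D_n$ is a finite-length continuum, the $D_n$ are pairwise disjoint and meet $G$ only at $w_n$, and choosing $\sigma_n,\sigma_n'$ so that $\sum_n\big(\sigma_n+\sigma_n'\length(H)\big)<\delta$ yields $\mathcal{H}^1(F)-\mathcal{H}^1(G)<\delta$. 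Since $D_n\subset B(w_n,2\sigma_n)$ with $w_n\to x\in G$ and $\sigma_n\to 0$, the only possible accumulation point of $\bigcup_n D_n$ outside the individual $D_n$ is $x\in G$; hence $F$ is a compact, connected set of finite length, so by \cite[Theorem 4.4]{AO} it is a Lipschitz curve $F=f([0,1])$. As $G\subset F$ we have $K\subset F$, and since all modifications occur inside $B(x,r)$ we have $F\setminus B(x,r)=G\setminus B(x,r)$. Finally, because $\alpha_n\neq c_n$ and $\dist(c_n,G)>0$, for all sufficiently small $\kappa>0$ one has $F\cap \overline{B}(c_n,\kappa)=H_n\cap\overline{B}(c_n,\kappa)$, so by locality of tangents every blow-up of $F$ at $c_n$ agrees with a blow-up of $H_n$ at its center; as these range exactly over $\wtan(H,{\bf 0})=\mathfrak{C}_U(\R^d;{\bf 0})$, we conclude $\wtan(F,c_n)=\mathfrak{C}_U(\R^d;{\bf 0})$, which is what the first paragraph requires.

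The main obstacle, I expect, is the second step: guaranteeing that the room furnished by Lemma \ref{lem1} can be taken \emph{near} $x$ rather than merely far from $K$. The subtlety is that proximity of $y_n$ to $x$ in $\R^d$ does not by itself control the location of the connecting subarc of $G$; the fix is to pass to preimages $b_n$ converging to a preimage $\beta$ of $x$ (which exists precisely because $x$ is non-isolated), so that the relevant subarc, and hence the bump $w_n$ inside it, is forced into a shrinking neighborhood of $x$. A secondary technical point is verifying that grafting \emph{infinitely many} loops keeps $F$ a Lipschitz curve; this is handled cleanly by checking that $F$ is a finite-length continuum and invoking \cite[Theorem 4.4]{AO}, rather than constructing a parameterization by hand.
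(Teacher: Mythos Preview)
Your argument is correct and follows the same architecture as the paper's proof: use Lemma~\ref{lem1} to locate points $w_n=g(\zeta_n)\to x$ at positive distance from $K$, graft scaled copies of $H$ nearby so that $\wtan(F,c_n)=\mathfrak{C}_U(\R^d;{\bf 0})$, and then invoke Remark~\ref{tanrem}(2) and Proposition~\ref{prop:alltan}. The genuine difference lies in the grafting step. The paper centers each $H_n$ at the point $g(\zeta_n)\in G$ itself, which forces it to excise $G$ from a ball around that point and reroute all the other strands of $G$ crossing that ball along spherical arcs (this is the content of the auxiliary Lemma~\ref{guh}). You instead center $H_n$ at a fresh point $c_n\notin G$ and attach it with a single segment, adding to $G$ rather than cutting it. Your route is more elementary and sidesteps Lemma~\ref{guh} entirely; the paper's surgery has the mild advantage that property~(ii) there gives an explicit cube on which $F$ equals $H_n$, which is convenient bookkeeping for the later induction in Theorem~\ref{thm0}, but your local identity $F\cap\overline{B}(c_n,\kappa)=H_n\cap\overline{B}(c_n,\kappa)$ serves the same purpose.

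Two small technical points to tighten. First, the inference ``$\alpha_n\neq c_n$ and $\dist(c_n,G)>0$, hence $F\cap\overline{B}(c_n,\kappa)=H_n\cap\overline{B}(c_n,\kappa)$'' is not quite complete: you also need $c_n\notin[w_n,\alpha_n]$, since otherwise the connecting segment itself passes through $c_n$ and contaminates the blow-up. This is easy to arrange (choose $\alpha\in H\setminus\{{\bf 0}\}$ so that $\alpha_n$ is not on the ray from $w_n$ through $c_n$), but it should be said. Second, you assert the closed balls $\overline{B}(w_n,\sigma_n)$ are pairwise disjoint and then use $D_n\subset B(w_n,2\sigma_n)$ to separate the $D_n$; for that separation you need the \emph{doubled} balls to be pairwise disjoint, so take $\sigma_n$ half as large.
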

\begin{proof}
Fix a point $x\in K$ and let $(y_n)_{n\in\N}\subset K\setminus\{x\}$ be a sequence of (distinct) points with $\lim_{n\to\infty} y_n=x$ and with $\sum_{n\in\N}|x-y_n|<\infty$. Let $G=g([0,1])$ be a Lipschitz capture of $K$. For each $n\in\N$, note that $\dist(g^{-1}(\{x\}),g^{-1}(\{y_n\}))=:\eta_n>0$ with $\eta_n\to 0$ as $n\to\infty$, so let $a_n\in g^{-1}(\{x\})$ and $b_n\in g^{-1}(\{y_n\})$ realize $\eta_n=|a_n-b_n|$. Then by Lemma \ref{lem1}, for each $n\in\N$, there exists a value $\zeta_n\in(\min(\{a_n,b_n\}),\max(\{a_n,b_n\}))$ with $\dist(g(\zeta_n),K)> \frac{1}{4}\lambda|x-y_n|$ and such that $g(\zeta_n)$ is a point of $\mathcal{H}^1$ density $1$ in $G$.

We claim now that for each $N\in\N$, there are only finitely many $m\in\N$ such that
\begin{equation}\label{eqn1}
B\left(g(\zeta_{m}),\frac{1}{16}\lambda |x-y_{m}|\right)\cap B\left(g(\zeta_N),\frac{1}{16}\lambda |x-y_N|\right)\neq\emptyset.
\end{equation}
To see this, fix $N\in\N$ and suppose for the sake of contradiction that there is a strictly increasing sequence $(m_k)_{k\in\N}\subset \N$ such that (\ref{eqn1}) holds for each $m_k$. Then for each $k\in\N$, we have that $|g(\zeta_{m_k})-g(\zeta_N)|\leq \frac{1}{16}\lambda (|x-y_{m_k}|+|x-y_N|)$. Note that $\lim_{k\to\infty}g(\zeta_{m_k})=x$ since $|g(\zeta_{m_k})-x|\leq \eta_{m_k}\Lip(g)$ and $\eta_{m_k}\to 0$ as $k\to\infty$. Therefore,
\begin{align*}
\frac{1}{4}\lambda|x-y_N|&\leq |g(\zeta_N)-x|\leq\lim_{k\to\infty}\left(|g(\zeta_N)-g(\zeta_{m_k})|+|g(\zeta_{m_k})-x|\right)\\
&=\lim_{k\to\infty}|g(\zeta_N)-g(\zeta_{m_k})|\leq \lim_{k\to\infty}\frac{1}{16}\lambda(|x-y_{m_k}|+|x-y_N|)\\
&\leq \frac{1}{16}\lambda |x-y_N|,
\end{align*}
but this is impossible as $\lambda|x-y_N|>0$.

Thus, there exists a subsequence $(y_{n_k})_{k\in\N}$ of $(y_n)_{n\in\N}$ for which (\ref{eqn1}) fails for every distinct pair of $k_1,k_2$. For ease of notation, we call this sequence $(y_n)_{n\in\N}$, and similarly relabel $\zeta_n$, $a_n$, and $b_n$. Then define, for $N\in\N$, the sets
\[Z_N:=\left(G\setminus\left(\bigcup_{n\geq N} B\left(g(\zeta_n),\frac{1}{16}\lambda |x-y_n|\right)\right)\right)\cup\left(\bigcup_{n\geq N} \left(\frac{1}{32\sqrt{d}}\lambda|x-y_n| H+g(\zeta_n)\right)\right).\]
Observe also the following inequality:
\[\mathcal{H}^1(Z_N)\leq\mathcal{H}^1(G)+\frac{1}{32\sqrt{d}}\lambda\mathcal{H}^1(H)\sum_{n=N}^\infty |x-y_n|<\infty.\]

To continue with the proof of Lemma \ref{lem2}, we must construct a Lipschitz capture of $Z_N$ which is ``appropriately close to" the existing Lipschitz capture $G$ of $K$. This is made more precise and subsequently proven in the following lemma.
\begin{lemma}\label{guh}
There exists a Lipschitz capture $F_N=f_N([0,1])$ of $Z_N$ satisfying the following three properties for $n\geq N$:
\begin{enumerate}
\item[(i)]
\[F\setminus\left(\bigcup_{n\geq N}B\left(g(\zeta_n),\frac{1}{16}\lambda |x-y_n|\right)\right)=G\setminus\left(\bigcup_{n\geq N}B\left(g(\zeta_n),\frac{1}{16}\lambda|x-y_n|\right)\right),\]
\item[(ii)]
\[F\cap \left(\frac{1}{32\sqrt{d}}\lambda|x-y_n|[-1,1]^d+g(\zeta_n)\right)=H_n,\]
\item[(iii)]
$\mathcal{H}^1(F)-\mathcal{H}^1(G)$ can be made smaller than an arbitrary positive number,
\end{enumerate}
where $H_n:=\frac{1}{32\sqrt{d}}\lambda|x-y_n|H+g(\zeta_n)$.
\end{lemma}
\begin{proof}[Proof of Lemma \ref{guh}]
Fix $N\in\N$ and $n\geq N$. Choose the values $\zeta_n$ as in Lemma \ref{lem1}. Note that $g^{-1}(G\cap B(g(\zeta_n),\frac{1}{12} \lambda |x-y_n|))$ is the disjoint union of countably many open intervals $I^n_k=(s^n_k,t^n_k)$. Enumerate these intervals so that $\zeta_n\in I^n_1$. For $k\neq 1$, let $J^n_k\subset \partial B(g(\zeta_n),\frac{1}{16} \lambda |x-y_n|)$ be a geodesic on this sphere connecting $g(s^n_k)$ to $g(t^n_k)$. Note that $\mathcal{H}^1(J^n_k)\leq \frac{\pi}{2} \mathcal{H}^1(g(I^n_k))$.

For $k=1$, let 
\[p_1,p_2\in\left(\frac{1}{32\sqrt{d}}\lambda |x-y_n|(\partial [-1,1]^d\cap H)+g(\zeta_n)\right)\]
be, respectively, a point closest to $g(s^n_1)$ and to $g(t^n_1)$ in the latter set. Let \[J^n_1:=[g(s^n_1),p_1]\cup[g(t^n_1),p_2]\cup H_n,\]
where $[g(s^n_1),p_1]$ and $[g(t^n_1),p_2]$ are the line segments connecting their respective endpoints. Then note that $\mathcal{H}^1(J^n_1)\leq \lambda |x-y_n|$ and that $\mathcal{H}^1(g(I^n_1))\geq \frac{1}{2} \lambda |x-y_n|$. Let $F_n:=Z_N\cup(\cup_k J^n_k)$. Observe the following:
\begin{align*}
\mathcal{H}^1(F_n)-\mathcal{H}^1(G)&\leq\sum_k (\mathcal{H}^1(J^n_k)-\mathcal{H}^1(g(I^n_k)))\\
&\leq \frac{1}{2}\lambda |x-y_n|+(\frac{\pi}{2}-1)\sum_{k\geq 2}\mathcal{H}^1(g(I^n_k))\\
&\leq \frac{1}{2}\lambda |x-y_n|+(\frac{\pi}{2}-1)\mathcal{H}^1(G\cap B(g(\zeta_n),\frac{1}{16}\lambda |x-y_n|))\\
&\leq C_0|x-y_n|,
\end{align*}
where the constant $C_0$ in the last line depends only on $\lambda$, a parameter of the geometry of $K$, and on $g(\zeta_n)$ being a point of $\mathcal{H}^1$ density $1$ in $G$. In particular, it does not depend on $n$ or on $N$.

By the disjointness assumption from the failure of (\ref{eqn1}) for $(y_{n_k})$, letting $\overline{F}_N=\cup_{n\geq N}F_n$, we have that $\overline{F}_N$ has property (i) and has property (ii) for $n\geq N$. Furthermore, $\mathcal{H}^1(\overline{F}_N)-\mathcal{H}^1(G)\leq C\sum_{n\geq N}|x-y_n|$, where the constant $C$ depends only on $\lambda$, and does not depend on $N$. Then since $\sum_{n\in\N} |x-y_n|<\infty$, we can make this difference arbitrarily small, yielding property (iii). This concludes the proof of Lemma \ref{guh}.
\end{proof}
To conclude the proof of Lemma \ref{lem2}, the only remaining observation to make is that $F_N$ is a finite-length continuum containing $Z_N$ and satisfying (\ref{eqn2}). Alongside properties (i)-(iii), this implies that $F_N$ is a Lipschitz capture of $K$ with the desired properties. Furthermore, by the failure of (\ref{eqn1}), by the fact that $\wtan(H,{\bf 0})=\mathfrak{C}_U(\R^d;{\bf 0})$, by property (ii) of Lemma \ref{guh}, and by the second part of Remark \ref{tanrem}, we have (\ref{eqn2}) is satisfied by $F_N$ since $y_n\to x$ as $n\to\infty$.
\end{proof}

\begin{proof}[Proof of Theorem \ref{thm0}]
We are finally ready to prove Theorem \ref{thm0}. Let $\{x_n\}_{n\in\N}$ be a countable  dense subset of $K$ and let $G_1=g_1([0,1])$ be a Lipschitz capture of $K$ as in Lemma \ref{guh} with $x=x_1$, in particular, satisfying (\ref{eqn2}) with $x=x_1$. Let $(\delta_n)_{n\in\N}>0$ such that $\sum_{n\in\N} \delta_n<\infty$. We proceed to construct the desired Lipschitz capture $G=g([0,1])$ of $K$ recursively.

Let $n\in\N$ and assume we have already a Lipschitz capture $G_n=g_n([0,1])$ of $K$ such that for each $k\in\{1,\dots,n\}$, $\Psi-\wtan (G_n,x_k)=\mathfrak{C}_U(\R^d;{\bf 0})$. 

We now construct an appropriate Lipschitz capture $G_{n+1}=g_{n+1}([0,1])$ of $K$. Let $r_{n+1}>0$ be small enough that for each $k\in\{1,\dots,n\}$, $x_k\notin \overline{B}(x_{n+1},2 r_{n+1})$. By Lemma \ref{guh}, let $G_{n+1}=g_{n+1}([0,1])$ be a Lipschitz capture of $K$ such that $G_{n+1}\setminus B(x_{n+1},r_{n+1})=G_n \setminus B(x_{n+1},r_{n+1})$ and $\mathcal{H}^1(G_{n+1})-\mathcal{H}^1(G_n)<\delta_{n+1}$. Then for every $k\in\{1,\dots,n\}$, $\Psi-\wtan (G_{n+1},x_k)=\mathfrak{C}_U(\R^d;{\bf 0})$ and by construction, $\Psi-\wtan (G_{n+1},x_{n+1})=\mathfrak{C}_U(\R^d;{\bf 0})$ as well. This follows from a sequence of simple observations.

First, observe that the Hausdorff limit $\lim_{n\to\infty} G_n=:G$ exists, $K\subset G$, and $G$ is a continuum of finite length by Go{\l}ab's Semicontinuity Theorem, so it is a Lipschitz capture of $K$ by \cite[Theorem 4.4]{AO} (see this same paper for discussion of Go{\l}ab's Semicontinuity Theorem as well). We now claim that for every $n\in\N$, $\Psi-\wtan (G,x_n)=\mathfrak{C}_U(\R^d;{\bf 0})$.

To see this, note that when $G_{n+1}$ is constructed from $G_n$ as in Lemma \ref{guh}, the sets described in property (ii) of this lemma for $G_n$ do not intersect $B(x_{n+1},r_{n+1})$. Therefore, (\ref{eqn2}) still holds for $G_{n+1}$ and for $G_n$ about $x_n$. That is, for every $k\in\{1,\dots,n+1\}$, $\Psi-\wtan(G_{n+1},x_k)=\mathfrak{C}_U(\R^d;{\bf 0})$. Then the same holds for subsequent $G_{n+m}$ constructed in this manner, thus we have that for every $n\in\N$, $\Psi-\wtan(G,x_n)=\mathfrak{C}_U(\R^d;{\bf 0})$.

Finally, by the second part of Remark \ref{tanrem}, we have that $\Psi-\wtan(G,x)=\mathfrak{C}_U(\R^d;{\bf 0})$ for every $x\in K$, as desired.
\end{proof}

\begin{rem}
Note that if $K\subset \R^d$ is any compact set with Assouad dimension $\dim_A(K)<1$, then $K$ is also uniformly disconnected and admits a Lipschitz capture, so every compact set with Assouad dimension less than $1$ admits a Lipschitz capture which attains every possible pseudotangent at every point of $K$.
\end{rem}

\section{Examples}\label{conclusion}

In the following two examples, we show that Theorem \ref{thm0} is not an exhaustion of the topic of tangents or pseudotangents to a Lipschitz curve. The assumptions on $K$ are not necessarily strict. In particular, Question \ref{q1} is a very strong question one could ask about pseudotangents to Lipschitz curves, and it is still open. 
\begin{ex}\label{ex1}[A set of Hausdorff dimension $1$ which attains every pseudotangent along a Lipschitz curve]

For natural numbers $k\geq 2$, let $C_k\subset [0,1/k^2]$ be a self-similar Cantor set of Hausdorff dimension $\log(k)/\log(k+1)$. Then by self-similarity, $\dim_A(C_k)=\log(k)/\log(k+1)$ as well. In $\R^d$, for $d\geq 2$, let
\[K:=\{{\bf 0}\}\cup\left(\bigcup_{k\geq 2} \{0\}^{d-2}\times \{1/k\}\times C_k\right).\]
Then $K\subset \R^d$ is compact, $\dimh(K)=1$, $\mathcal{H}^1(K)=0$, and $K$ admits a Lipschitz capture $F$ by applying Theorem \ref{thm0} to each $C_k$ in such a way that the curves do not intersect and such that the sum of the lengths of the curves is finite, then adding also the segment $[0,1]\times\{0\}$. Then for all $x\in K$, $\Psi-\wtan (F,x)=\mathfrak{C}_U(\R^d;{\bf 0})$.

\end{ex}

\begin{ex}\label{ex2}[A Lipschitz curve with non-unique pseudotangents at every point]

Let $\{q_j\}_{j\in\N}$ be an enumeration of $\Q\cap [0,1]$, let $K_0:=[0,1]\times\{0\}\subset \R^2$, and let \[K_1:=K_0\cup\left(\bigcup_{j\in\N}\{q_j\}\times [0,\frac{1}{4(j+1)^2}]\right).\]
We proceed by recursion.

For $n\geq 1$, given finite-length continua $K_{n-1}\subset K_n$, satisfying that the components of $K_n\setminus K_{n-1}$ are either all parallel to the $x$-axis or all parallel to the $y$-axis, and such that $K_{n-1}\subset \overline{K_n\setminus K_{n-1}}$, we construct $K_{n+1}$ as follows. Let $\{(x_j^n,y_j^n)\}_{j\in\N}$ be a countable dense subset of $K_n\setminus K_{n-1}$. If the components of $K_n\setminus K_{n-1}$ are all parallel to the $x$-axis, then let
\[K_{n+1}:=K_n\cup\left(\bigcup_{j\in\N} \{x_j^n\}\times[y_j^n,y_j^n+\frac{1}{(j+1)^2(n+1)^2}]\right),\]
and if the components of $K_n\setminus K_{n-1}$ are instead all parallel to the $y$-axis, then let
\[K_{n+1}:=K_n\cup \left(\bigcup_{j\in\N}[x_j^n,x_j^n+\frac{1}{(j+1)^2(n+1)^2}]\times\{y_n^j\}\right).\]

The Hausdorff limit $K_\infty$ of these sets $K_n$ exists and by  \cite[Theorem 4.4]{AO} and by Go{\l}ab's Semicontinuity Theorem, it is a continuum of finite length, and so a Lipschitz curve. It is straightforward to see that since points with a unique tangent line are dense in $K_\infty$ and since points admitting a tangent which is not a line are also dense in $K_\infty$, every point of $K_\infty$ admits at least two distinct pseudotangents.
\end{ex}
\bibliography{bib}

@article {QSS,
    AUTHOR = {Shaw, Eve and Vellis, Vyron},
     TITLE = {H\"older curves with exotic tangent spaces},
   JOURNAL = {J. London Math. Soc.},
  FJOURNAL = {Journal of the London Mathematical Society},
    VOLUME = {122},
      YEAR = {2025},
    NUMBER = {4},
     PAGES = {e70320},
    DOI = {https://doi.org/10.1112/jlms.70320},
    URL = {https://londmathsoc.onlinelibrary.wiley.com/doi/abs/10.1112/jlms.70320},
    EPRINT = {https://londmathsoc.onlinelibrary.wiley.com/doi/pdf/10.1112/jlms.70320},
}

@article {PS15,
    AUTHOR = {Preiss, David and Speight, Gareth},
     TITLE = {Differentiability of {L}ipschitz functions in {L}ebesgue null
              sets},
   JOURNAL = {Invent. Math.},
  FJOURNAL = {Inventiones Mathematicae},
    VOLUME = {199},
      YEAR = {2015},
    NUMBER = {2},
     PAGES = {517--559},
      ISSN = {0020-9910,1432-1297},
   MRCLASS = {26B05 (26A16 26B35 49J52)},
  MRNUMBER = {3302120},
MRREVIEWER = {Emma\ D'Aniello},
       DOI = {10.1007/s00222-014-0520-5},
       URL = {https://doi.org/10.1007/s00222-014-0520-5},
}

@book {falc,
    AUTHOR = {Falconer, K. J.},
     TITLE = {The geometry of fractal sets},
    SERIES = {Cambridge Tracts in Mathematics},
    VOLUME = {85},
 PUBLISHER = {Cambridge University Press, Cambridge},
      YEAR = {1986},
     PAGES = {xiv+162},
      ISBN = {0-521-25694-1; 0-521-33705-4},
   MRCLASS = {28-02 (28A99 60J65)},
  MRNUMBER = {867284},
MRREVIEWER = {K.\ E.\ Hirst},
}

@book {Mattila,
    AUTHOR = {Mattila, Pertti},
     TITLE = {Geometry of sets and measures in {E}uclidean spaces},
    SERIES = {Cambridge Studies in Advanced Mathematics},
    VOLUME = {44},
      NOTE = {Fractals and rectifiability},
 PUBLISHER = {Cambridge University Press, Cambridge},
      YEAR = {1995},
     PAGES = {xii+343},
      ISBN = {0-521-46576-1; 0-521-65595-1},
   MRCLASS = {28A75 (49Q20)},
  MRNUMBER = {1333890},
MRREVIEWER = {Harold\ Parks},
       DOI = {10.1017/CBO9780511623813},
       URL = {https://doi.org/10.1017/CBO9780511623813},
}

@article {BL,
    AUTHOR = {Badger, Matthew and Lewis, Stephen},
     TITLE = {Local set approximation: {M}attila-{V}uorinen type sets,
              {R}eifenberg type sets, and tangent sets},
   JOURNAL = {Forum Math. Sigma},
  FJOURNAL = {Forum of Mathematics. Sigma},
    VOLUME = {3},
      YEAR = {2015},
     PAGES = {Paper No. e24, 63},
      ISSN = {2050-5094},
   MRCLASS = {49Q15 (28A75 35R35 49J52)},
  MRNUMBER = {3482273},
MRREVIEWER = {Stefan\ Steinerberger},
       DOI = {10.1017/fms.2015.26},
       URL = {https://doi.org/10.1017/fms.2015.26},
}

@book {Beer,
    AUTHOR = {Beer, Gerald},
     TITLE = {Topologies on closed and closed convex sets},
    SERIES = {Mathematics and its Applications},
    VOLUME = {268},
 PUBLISHER = {Kluwer Academic Publishers Group, Dordrecht},
      YEAR = {1993},
     PAGES = {xii+340},
      ISBN = {0-7923-2531-1},
   MRCLASS = {49-02 (46A99 46B99 46N10 49J45 54-01 90C31)},
  MRNUMBER = {1269778},
MRREVIEWER = {P.\ S.\ Kenderov},
       DOI = {10.1007/978-94-015-8149-3},
       URL = {https://doi.org/10.1007/978-94-015-8149-3},
}

@article {AO,
    AUTHOR = {Alberti, Giovanni and Ottolini, Martino},
     TITLE = {On the structure of continua with finite length and {G}olab's
              semicontinuity theorem},
   JOURNAL = {Nonlinear Anal.},
  FJOURNAL = {Nonlinear Analysis. Theory, Methods \& Applications. An
              International Multidisciplinary Journal},
    VOLUME = {153},
      YEAR = {2017},
     PAGES = {35--55},
      ISSN = {0362-546X},
   MRCLASS = {28A75 (26A45 49J45 49Q20 54F50)},
  MRNUMBER = {3614660},
       DOI = {10.1016/j.na.2016.10.012},
       URL = {http://dx.doi.org/10.1016/j.na.2016.10.012},
}
\bibliographystyle{amsbeta}

\end{document}